\documentclass[12pt]{article}
\usepackage{xcolor,graphicx} 
\usepackage{amsmath,amsthm,amssymb}

\usepackage{ifthen}

\usepackage{amsmath,amsthm,amssymb,mathtools}
\usepackage{pgf}
\usepackage{graphicx}
\usepackage{tikz}
\usetikzlibrary{backgrounds, patterns, shapes, calc, decorations.pathreplacing, decorations.pathmorphing}

\tikzset{%
    vtx/.style={draw,circle,very thin,white,fill=black,inner sep=0pt,text width=8pt},
    pth/.style={draw,decorate,decoration={snake},thick},
    edg/.style={ultra thick},
    smedg/.style={very thin},
    polyg/.style={very thin,fill=white},
    dotedg/.style={ultra thick,dotted},
    ellip/.style={very thin,black,fill=white},
    spedg/.style={rounded corners,line width=5pt,purple!50}, 
}
\tikzset{/pgf/foreach/parse=true}
\pgfdeclarelayer{backback}
\pgfdeclarelayer{back}
\pgfdeclarelayer{fore}
\pgfdeclarelayer{forefore}
\pgfsetlayers{backback,back,main,fore,forefore}

\newtheorem{theorem}{Theorem}[section]

\newtheorem{obs}[theorem]{Observation}
\newtheorem{prop}[theorem]{Proposition}

\newtheorem{lemma}[theorem]{Lemma}
\newtheorem{proposition}[theorem]{Proposition}

\newtheorem{claim}[theorem]{Claim}

\newtheorem{remark}[theorem]{Remark}

\newtheorem{conjecture}[theorem]{Conjecture}

\def\beq{\begin{equation}}\def\eeq{\end{equation}}
\def\beqn{\begin{eqnarray}}\def\eeqn{\end{eqnarray}}

\def\qed{\ifhmode\unskip\nobreak\fi\quad\ifmmode\Box\else$\Box$\fi}


\begin{document}
\title{Bounded diameter variations of Ryser's conjecture}
\author{Andr\'as Gy\'arf\'as\thanks{Alfr\'ed R\'enyi Institute of Mathematics, Budapest, P.O. Box 127, Budapest, Hungary, H-1364.
\texttt{gyarfas.andras@renyi.hu}, \texttt{sarkozy.gabor@renyi.hu}} \thanks{Research supported in part by
NKFIH Grant No. K132696.} \and G\'{a}bor N. S\'ark\"ozy\footnotemark[1]
\thanks{Computer Science Department, Worcester Polytechnic Institute, Worcester, MA.} \thanks{Research supported in part by
NKFIH Grants No. K132696, K117879.}}


\maketitle
\begin{abstract}
In this paper we study bounded diameter variations of the following form of Ryser's conjecture. For every graph $G=(V,E)$ with independence number $\alpha(G)=\alpha$ and integer $r\geq 2$, in every $r$-edge coloring of $G$ there is a cover of $V(G)$ by the vertices of $(r-1)\alpha$ monochromatic connected components. Mili\'{c}evi\'{c} initiated the question whether the diameters of the covering components can be bounded.

For any graph $G$ with $\alpha(G)=2$  we show that in every 2-coloring of the edges, $V(G)$ can be covered by the vertices of two monochromatic subgraphs of diameter at most 4.
This improves a result of DeBiasio et al., which in turn improved a result of Mili\'{c}evi\'{c}. It remains open whether diameter $4$ can be strengthened to diameter $3$, we could do this only for certain graphs, including odd antiholes.

We propose also a somewhat orthogonal aspect of the problem. Suppose that we fix the diameter $d$ of the
monochromatic components, how many do we need to cover the vertex set? For $d=2,2\le r \le 3$, the exact answer is $r\alpha$ and for $d=4,r=2$, we prove the upper bound $\lfloor 3\alpha/2\rfloor$.

\end{abstract}

\section{Introduction}

If $G$ is a graph whose edges are colored with two colors, $G_1, G_2$ denote the subgraphs defined by the edges of colors 1,2, respectively.  For convenience we always assume that $G_i, G_j$ refer to different indices in $\{1,2\}$. When it is more convenient, we refer to colors 1, 2 as red and blue, respectively. $G^c$ denotes the {\em complement} of a graph $G$. The graph $H=(V,F)$ is a {\em spanning subgraph} of the graph $G=(V,E)$ if $F\subset E$ and every vertex of $G$ is incident to some edge of $F$. The neighborhood of vertex $v$ is denoted by $N(v)$, and by $N_i(v)$ in color $i$.
If we connect the centers of two stars, we get a {\em double star}, the connecting edge is called the {\em base}. If we connect the centers of three stars with a path of length 2, we get a {\em triple star}.
$K_n$ denotes the complete graph on $n$ vertices.

{\em Perfect graphs} are the graphs $G$ in which all induced subgraphs $H\subseteq G$ satisfy the property $\omega(H)=\chi(H)$, where $\omega(H),\chi(H)$ are the size of the largest complete subgraph and the chromatic number.  Berge had two conjectures, the weaker one was that complements of perfect graphs are also perfect, i.e. all induced subgraphs $H\subseteq G$ satisfy the property $\alpha(H)=\theta(H)$, where $\alpha(H),\theta(H)$ are the size of the largest independent set and the clique-cover number (the minimum number of complete subgraphs whose vertices cover the whole vertex set). This was proved by Lov\'asz \cite{LO2} (weak perfect graph theorem). The stronger conjecture stated that perfect graphs can be characterized by excluding odd cycles of length at least 5 (odd holes) and their complements (odd antiholes) as induced subgraphs. The stronger conjecture was proved by Chudnovsky, Robertson, Seymour and Thomas \cite{CRST} (strong perfect graph theorem).

In a graph $G=(V,E)$ and for vertices $u, v\in V(G)$, let $d_G(u,v)$ (or just $d(u,v)$) denote the {\em distance} of $u$ and $v$ in $G$, i.e. the length of the shortest path connecting $u$ and $v$ in $G$. If there is no path connecting $u$ and $v$, let $d(u,v)=\infty$. The {\em diameter} of a graph $G$, denoted by $diam(G)$, is the smallest integer $d$ for which $d(u,v)\leq d$ for all $u, v \in V(G)$. The diameter of a one-vertex graph is defined as $0$.  If we have a $(G_1, G_2)$ edge-coloring of a graph $G$, sometimes we call $diam(G_1)$ the {\em red diameter}, $diam(G_2)$ the {\em blue diameter} of $G$.

For a graph $G$, a {\em substitution} into $G$ is a graph obtained by replacing each vertex $v$ of $G$ with an arbitrary graph $G_v$ and replacing each $(u,v)\in E(G)$ with a complete bipartite graph between $G_u$ and $G_v$. Note that if $G'$ is obtained by a substitution into $G$ with $diam(G) \ge 2$ then $diam(G')=diam(G)$.

The following is a reformulation (from \cite{GYsztaki}, more accessible in \cite{DEB}, page 3) of a classical conjecture attributed to Ryser (stated in the thesis of Henderson \cite{HEN}, a more general form is due to Lov\'asz \cite{LO}).

\begin{conjecture}
For every graph $G=(V,E)$ with $\alpha(G)=\alpha$ and integer $r\geq 2$, in every $r$-edge coloring of $G$ there is a cover of $V(G)$ by $(r-1)\alpha$
monochromatic connected components.
\end{conjecture}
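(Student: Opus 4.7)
The statement is Ryser's conjecture, which is famously open for all $r\ge 4$, so any realistic plan can only recover what is known ($r\le 3$) and then lay out a scheme that is expected to break at $r=4$. My plan is a double induction on $\alpha$ and $r$. For the base case $\alpha=1$ the host graph $G$ is a clique, and I would invoke (or reprove by a short extremal argument) the theorem of Gy\'arf\'as that every $r$-edge-colored $K_n$ admits a cover of $V(K_n)$ by at most $r-1$ monochromatic components; this exactly matches the conjectured bound $(r-1)\alpha=r-1$. For $r=2$ the sub-claim reduces to the classical observation that in every $2$-coloring of the edges of a connected graph, at least one of $G_1,G_2$ is connected, and otherwise the two connected components in the disconnected color provide two monochromatic pieces whose refinement into one component of each color yields the required bound $\alpha$.

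For the inductive step I would fix a maximum independent set $I=\{v_1,\dots,v_\alpha\}$ and build the bipartite incidence structure $\mathcal{H}$ whose left vertices are the elements of $I$, whose right vertices are the monochromatic components of $G$, and whose edges record containment. Since $I$ is a \emph{maximum} independent set, every vertex of $G$ has a neighbor in $I$ and therefore shares a monochromatic component with some $v_i$; hence the conjecture is equivalent to covering the $I$-side of $\mathcal{H}$ by $(r-1)\alpha$ components. For $r=3$ one can organize the triples of colors at each $v_i$ into a $3$-partite $3$-uniform hypergraph on the index set of $I$, and the Ryser-type covering inequality reduces to Aharoni's theorem for tripartite $3$-graphs; this is essentially the known proof of the $r=3$ case. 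A complementary reduction I would try in parallel is the following ``peeling'' step: choose a vertex $v_1\in I$ that lies in a monochromatic component $C$ absorbing as many other $v_i$ as possible, remove $V(C)$, and apply induction with parameters $\alpha'<\alpha$ and the same $r$; one hopes that $C$ alone is worth the cost of the indices of $I$ it absorbs.

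The hard part, and the point at which this plan is bound to stop short of a full proof, is exactly the generalization to $r\ge 4$: no $r$-partite $r$-uniform hypergraph K\"onig-type theorem with the required cover bound is known, and in fact such a theorem is equivalent to (the hypergraph version of) Ryser's conjecture itself. Likewise the peeling step leaks a factor that is absorbed precisely by the extra $\alpha$ one is trying to save, so no obvious combination of these two ideas closes the induction. A route I would genuinely attempt beyond the known cases is to strengthen the inductive hypothesis with structural side information — for instance tracking which $v_i\in I$ are already ``doubly covered,'' or prescribing that one of the chosen components be of a given color — and to look for monochromatic substructures (double stars, triple stars, bounded-diameter pieces of the type studied in the rest of this paper) that force the extremal configuration. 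The fact that Ryser's conjecture has resisted more than four decades of such attempts is the clearest indication that any two-to-four paragraph plan is optimistic at best for $r\ge 4$.
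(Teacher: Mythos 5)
This statement is Ryser's conjecture itself; the paper does not prove it (and explicitly records that it is open for all $r\ge 4$), so there is no proof of record to compare against. Your proposal is honest in not claiming a proof, and your overall framing --- $r=2$ via K\"onig, $r=3$ via Aharoni's tripartite theorem, and an inductive/peeling scheme that cannot close for $r\ge 4$ --- matches the state of the art as the paper describes it. To that extent there is nothing to fault in your conclusion.

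Two concrete inaccuracies in your recounting of the known cases should be corrected, though. First, your base case $\alpha=1$ is not a theorem for general $r$: the claim that every $r$-colored $K_n$ can be covered by $r-1$ monochromatic components is exactly the $\alpha=1$ instance of Ryser's conjecture, known only for $r\le 5$ (the paper cites Gy\'arf\'as/Tuza for $r=4$ and Tuza for $r=5$); the only bound that is trivially available for all $r$ is $r$ components (the components through a fixed vertex), which is off by one from what your induction needs. Second, your $r=2$ argument conflates two different facts: the observation that one of $G_1,G_2$ is connected holds for $2$-colored \emph{complete} graphs, whereas the $r=2$ case of the conjecture for a general graph with $\alpha(G)=\alpha$ is equivalent to K\"onig's theorem and is not obtained by that connectivity observation. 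Neither slip changes the verdict --- the statement remains a conjecture --- but as written your ``known cases'' would not survive scrutiny either.
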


There has been extensive research on this conjecture, see the excellent survey by DeBiasio et al. \cite{DEB} on various partial results, generalizations and strengthenings
of the conjecture. For $r = 2$, this is just K\"onig’s theorem \cite{KO}, and the $r = 3$ case was proved
by Aharoni \cite{AH}. While Ryser’s conjecture is still wide open for all $r \geq 4$, for $\alpha(G) = 1$ (i.e. for complete graphs) the conjecture holds for $r=4$ (\cite{GYsztaki}, \cite{TU1}) and for $r=5$ (\cite{TU2}).

Recently a new direction emerged starting with the work of Mili\'{c}evi\'{c} \cite{MI1,MI2}, where bounded diameter
variations of the conjecture were studied. In \cite{DEB} it was conjectured that we can guarantee in Ryser's
conjecture that the diameter of the monochromatic connected components is at most $f(r, \alpha)$ for some function $f$. In \cite{DGHS} it was proved
that $f(2, \alpha) \leq 8\alpha^2+12\alpha+4$. In \cite{DEB} the authors speculated that perhaps it is possible that $f$ is a constant; it is independent of both $r$ and $\alpha$.
This fascinating question is our starting point here in this paper.

The diameter problem for the  $r=2, \alpha=1$ case is answered by a well-known (folklore) property of $2$-colored complete graphs. It can be found in \cite{BO}, \cite{DGHS}, \cite{EMMP}, \cite{GY}, \cite{W}.
\begin{proposition}\label{simplefolk} Every 2-colored complete graph has a spanning monochromatic subgraph of diameter at most 3.
\end{proposition}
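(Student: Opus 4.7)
Color the edges of $K_n$ red and blue. The plan is to show that whenever the red graph $G_1$ does not admit a spanning subgraph of diameter at most $3$, the blue graph $G_2$ does (in fact with diameter at most $2$). First, dispense with the easy non-spanning case: if some vertex $v$ is incident only to blue edges then the blue star at $v$ gives $diam(G_2) \le 2$ and we are done. So I may assume that both $G_1$ and $G_2$ are spanning.

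Now suppose for contradiction that $diam(G_1) \ge 4$, and choose $x, y$ with $d_{G_1}(x,y) \ge 4$. The absence of a red $xy$-path of length $1$, $2$, or $3$ forces three structural facts: (a) the edge $xy$ is blue; (b) $N_1(x) \cap N_1(y) = \emptyset$; (c) there is no red edge between $N_1(x)$ and $N_1(y)$. Set $A := N_1(x)$, $B := N_1(y)$, and $C := V(K_n) \setminus (\{x,y\} \cup A \cup B)$. Working inside the complete graph, (a)--(c) say exactly that $x$ is blue-adjacent to every vertex of $\{y\} \cup B \cup C$, that $y$ is blue-adjacent to every vertex of $\{x\} \cup A \cup C$, and that every edge between $A$ and $B$ is blue.

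Finally, I would verify $diam(G_2) \le 2$ by routing every pair through $x$ or $y$: two vertices of $A$ share the common blue neighbor $y$; two of $B$ share $x$; two of $C$ share either one; an $A,C$-pair routes via $y$, a $B,C$-pair via $x$; an $A,B$-pair is joined directly by a blue edge; and pairs involving $x$ or $y$ are handled by the blue neighborhoods listed above. This contradicts $diam(G_1) \ge 4$ and establishes the proposition. The argument is essentially bookkeeping once (a)--(c) are in hand; the only place to exercise care is in extracting those three facts from $d_{G_1}(x,y) \ge 4$, and I anticipate no genuine obstacle.
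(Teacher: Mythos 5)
Your argument is correct and is essentially the argument the paper uses for the stronger Lemma~\ref{folk}(i): from $d_{G_1}(x,y)\ge 4$ you extract exactly the facts (a)--(c) that force all edges from $\{x,y\}$ to $N_1(y)\cup N_1(x)\cup C$ and all $[N_1(x),N_1(y)]$ edges to be blue, yielding blue diameter at most $2$. One cosmetic remark: there is no actual contradiction at the end --- $diam(G_2)\le 2$ is perfectly compatible with $diam(G_1)\ge 4$ --- so the proof should be phrased as a case split (either $diam(G_1)\le 3$ and red works, or $diam(G_1)\ge 4$ and blue works), not as a reductio.
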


It is worth mentioning that Proposition \ref{simplefolk} has the following consequence.

\begin{proposition}\label{perfectgraphs} The vertex set of every 2-colored perfect graph $G$ can be covered by  the vertices of $\alpha(G)$ monochromatic subgraphs, each with diameter at most $3$.
\end{proposition}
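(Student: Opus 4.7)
The plan is to combine the defining property of perfect graphs with the already-established Proposition \ref{simplefolk}. The bridge is the weak perfect graph theorem: since $G$ is perfect, so is its complement, which in particular gives $\alpha(G)=\theta(G)$. Thus $V(G)$ admits a partition $V(G)=V_1\cup V_2\cup\cdots\cup V_{\alpha(G)}$ where each $V_i$ induces a clique $K^i$ in $G$.

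Next, I would apply Proposition \ref{simplefolk} to each $K^i$ separately, with the edge coloring inherited from the given $2$-coloring of $G$. Each $K^i$ is a $2$-colored complete graph (possibly a single vertex, in which case the diameter is $0$), so it contains a spanning monochromatic subgraph $H^i$ of diameter at most $3$. Since $V(H^i)=V_i$ and the $V_i$ partition $V(G)$, the $\alpha(G)$ monochromatic subgraphs $H^1,\dots,H^{\alpha(G)}$ together cover $V(G)$, which is exactly what the proposition asks for.

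There is essentially no obstacle here: the only subtlety worth flagging is that diameter is an intrinsic property of each $H^i$ as an abstract graph, so it is not affected by the ambient graph $G$ — in particular, there is no worry that distances could grow when we view $H^i$ inside $G$. The proof is therefore a short two-step argument: invoke $\alpha(G)=\theta(G)$ to reduce to the complete-graph case, then apply Proposition \ref{simplefolk} clique by clique.
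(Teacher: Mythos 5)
Your proof is correct and follows exactly the paper's argument: invoke $\alpha(G)=\theta(G)$ (via the weak perfect graph theorem) to cover $V(G)$ by $\alpha(G)$ cliques, then apply Proposition \ref{simplefolk} to each clique. The remark that the diameter bound is intrinsic to each monochromatic subgraph is a fair (if unnecessary) clarification.
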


Indeed, $V(G)$ can be covered by the vertices of  $\alpha(G)$ complete subgraphs of $G$ so Proposition \ref{simplefolk} implies Proposition \ref{perfectgraphs}.

\section{Results}

\subsection{Graphs with $\alpha(G)=2$}

We say that a $2$-colored graph $G$ has a $(p,q)$-cover if $V(G)$ can be covered by the vertices of two monochromatic subgraphs, one with diameter at most $p$, the other with diameter at most $q$. Our main result is the following.

\begin{theorem}\label{main}  Assume that $G$ is a 2-colored graph with $\alpha(G)=2$. We have $f(2,2)\leq 4$. i.e. $V(G)$ has a $(4,4)$-cover.
\end{theorem}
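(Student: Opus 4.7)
I would start by handling two degenerate cases cheaply. If $G$ is a complete graph, then Proposition~\ref{simplefolk} applied to $G$ already gives a spanning monochromatic subgraph of diameter at most $3$, yielding even a $(3,0)$-cover. If $G$ is disconnected, $\alpha(G)=2$ forces $G$ to be the disjoint union of (at most) two cliques, so Proposition~\ref{simplefolk} applied to each component gives a $(3,3)$-cover. From here on I assume $G$ is connected and has at least one non-edge $uv$.

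Because $\alpha(G)=2$, for every third vertex $w$ the triple $\{u,v,w\}$ contains an edge, and since $uv\notin E(G)$ this edge is incident to $u$ or to $v$. Hence $V\setminus \{u,v\}=A\sqcup B\sqcup C$ where $A=N_G(u)\setminus N_G(v)$, $B=N_G(v)\setminus N_G(u)$, $C=N_G(u)\cap N_G(v)$. The same $\alpha=2$ argument applied, say, to $\{v,a,a'\}$ for $a,a'\in A$ gives that $\{u\}\cup A$ and $\{v\}\cup B$ are cliques of $G$. Applying Proposition~\ref{simplefolk} to each of the $2$-colored cliques $\{u\}\cup A$ and $\{v\}\cup B$ yields, for each, a spanning monochromatic subgraph of diameter at most $3$ in some color. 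I would then try to build the two covering subgraphs as monochromatic balls of radius $2$ (so of diameter at most $4$) centered inside $\{u\}\cup A$ and $\{v\}\cup B$ respectively, with colors chosen to match those given by Proposition~\ref{simplefolk}.

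The four subtypes $C^{rr},C^{rb},C^{br},C^{bb}$ of $C$ (determined by the colors of $uc$ and $vc$) interact with the two balls: three of the four types are absorbed automatically by a single colored edge to $u$ or $v$, and the one ``opposite-color'' type is the obstruction. The main obstacle I expect is this opposite-color subset of $C$, together with possible vertices in a set $W_\emptyset$ of vertices adjacent to neither endpoint (which appears when I switch from a non-edge $uv$ to a base edge $uv$ as in the figure). To handle both, I would exploit that $\alpha=2$ forces $W_\emptyset$ to be a clique completely $G$-joined to $W_u\cup W_v$, and that a problematic $c\in C$ forces many colored adjacencies to $A\cup B$; this either places $c$ within red- or blue-distance $2$ of an already chosen center, or lets me reroute the choice of monochromatic spanning subgraph inside $\{u\}\cup A$ or $\{v\}\cup B$ via Proposition~\ref{simplefolk} to absorb $c$ while keeping the diameter at most $4$. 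In the worst configurations I would restart the argument with a different base pair, using maximality (e.g.\ a non-edge $uv$ maximizing $|C|$, or an edge $uv$ minimizing $|W_\emptyset|$) to rule out the remaining hard sub-cases.
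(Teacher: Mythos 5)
Your opening setup is essentially the paper's: you fix a non-edge $uv$, observe that $A=N(u)\setminus N(v)$ and $B=N(v)\setminus N(u)$ extend to cliques $\{u\}\cup A$, $\{v\}\cup B$, and isolate the common neighbourhood $C$ split into four colour types. (Two remarks on the setup itself: when $uv$ is a non-edge there is no set $W_\emptyset$ of vertices adjacent to neither endpoint -- $\alpha(G)=2$ forbids it -- so that part of your plan is a detour; and your claim that exactly one of the four types of $C$ is obstructive presupposes that the two covering subgraphs get opposite colours, which you cannot force, since the colours of the spanning low-diameter subgraphs of $\{u\}\cup A$ and $\{v\}\cup B$ are dictated by the colouring, not chosen by you.)

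The genuine gap is that everything after ``the one opposite-colour type is the obstruction'' is a placeholder rather than an argument, and this is exactly where the work lies. Two concrete difficulties are not addressed. First, a problematic $c\in C$ need not be adjacent to any vertex of $A\cup B$ (with $\alpha=2$ this is entirely possible, since $c$ is already adjacent to both $u$ and $v$), so ``many colored adjacencies to $A\cup B$'' is not available, and a ball of radius $2$ around a centre in $\{u\}\cup A$ has no reason to reach $c$. Second, Proposition~\ref{simplefolk} only gives diameter $3$, and the clique $\{u\}\cup A$ may have diameter greater than $3$ in one colour and hence be forced into the other colour; the paper needs the full classification of Lemma~\ref{folk} (spanning double stars, the ${\cal H}_1/{\cal H}_2$ structure) to control which colour and which witness (double star, base edge) is available, and then two further devices you do not have: (a) a choice of the non-adjacent pair $x,y$ guaranteeing a nonempty homogeneous part of $C$, obtained not by a generic maximality argument but by locating an odd antihole in $G^c$ (Proposition~\ref{bipcomp}) and taking two consecutive same-coloured long diagonals; and (b) the key escape in the hard subcases, namely a vertex $z$ all of whose edges to a prescribed set are one colour, whose \emph{non-neighbours} in that set form a clique (again by $\alpha=2$) coverable by Proposition~\ref{simplefolk}, while $z$ itself extends a monochromatic triple star or star-of-stars of diameter at most $4$ (Proposition~\ref{szor}). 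Without these ingredients the ``reroute or restart'' step has no mechanism behind it, so the proposal does not yet constitute a proof.
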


A weaker form of Theorem \ref{main}, a $(6,6)$-cover was proved in \cite{DEB} which in turn improved the $(8,8)$-cover from \cite{MI1}.
We were not able to determine whether we have always a $(3,3)$-cover, but a $(2,2)$-cover does not necessarily exist (see Remark \ref{const} below).

For some graphs there are certainly $(3,3)$-covers, for example for graphs with a bipartite complement; this follows from Proposition \ref{simplefolk}.  Since a graph $G$ with $\alpha(G)=2$ and with a non-bipartite complement contains an odd antihole (see Proposition \ref{bipcomp} below), it is natural to consider the case when $G$ is an odd antihole. We show that odd antiholes also have $(3,3)$-coverings. In fact, we prove the following stronger result.


\begin{theorem}\label{antiby3} Assume that $G$ is a $2$-colored graph with $\alpha(G)=2$ such that for some $v\in V(G)$,  $V(G)\setminus \{v\}$ is the union of two vertex disjoint complete graphs $K'$ and $K''$ and $v$ is adjacent to all but one vertex of both $K'$ and $K''$.  Then $G$ has a $(3,3)$-cover.
\end{theorem}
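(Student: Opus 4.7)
Without loss of generality color the edge $x'x''$ red. Partition $K'\setminus\{x'\}=R'\cup B'$ and $K''\setminus\{x''\}=R''\cup B''$ according to the colors of the edges $v$ sends into each clique, and set $S_R=\{v\}\cup R'\cup R''$ and $S_B=\{v\}\cup B'\cup B''$; both are monochromatic of diameter at most $2$ and together they cover $V(G)\setminus\{x',x''\}$. Call $y\in\{x',x''\}$ \emph{R-attachable} if $y$ has a red edge (in $G$) to some vertex of $S_R\setminus\{v\}$, and \emph{B-attachable} analogously; attaching $y$ to the appropriate star by such an edge preserves diameter $\le 3$.

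I would then split into cases based on the attachability of $x',x''$. (a) If one is R-attachable and the other is B-attachable, use the extended stars $S_R\cup\{x'\}$ and $S_B\cup\{x''\}$ (or the symmetric pair). (b) If both are R-attachable, then $S_R\cup\{x',x''\}$ is red of diameter $\le 3$ because the edge $x'x''$ gives $d(x',x'')=1$; pair it with $S_B$. (c) If some $y\in\{x',x''\}$ is attachable to neither star, then by the definition of non-attachability every edge from $y$ to $R'\cup(R''\cap E(y))$ is blue and every edge from $y$ to $B'\cup(B''\cap E(y))$ is red, which forces $N_R(y)\supseteq B'\cup(B''\cap E(y))$ (and the symmetric statement for the other vertex if it is also unattachable). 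When both $x',x''$ are unattachable, the red ``double star'' with base $x'x''$ and leaves $N_R(x')\cup N_R(x'')$ has diameter $\le 3$ and contains $\{x',x''\}\cup B'\cup B''$, which combined with $S_R$ covers $V(G)$. The mixed sub-case where only one of $x',x''$ is unattachable is handled analogously, by extending the red double star through the color witness provided by the attachable vertex.

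The most delicate case is (d), when both $x',x''$ are B-attachable but neither is R-attachable. If some $b\in B'\cup B''$ has both $bx'$ and $bx''$ blue, then in $S_B\cup\{x',x''\}$ the shortcut $x'-b-x''$ of length $2$ keeps the diameter $\le 3$, paired with $S_R$. Otherwise, using $\alpha(G)=2$ one shows $R'\cap E(x'')=R''\cap E(x')=\emptyset$: any $u\in R'\cap E(x'')$ would satisfy $ux'$ blue (non-R-attachability of $x'$ via $R'$) and $ux''$ blue (non-R-attachability of $x''$ via $R'\cap E(x'')$), giving a common blue neighbor. In this rigid setting I would conclude by applying Proposition~\ref{simplefolk} to the subcliques $\{x'\}\cup B'\subseteq K'$ and $\{x''\}\cup B''\subseteq K''$ and gluing the resulting monochromatic spanning subgraphs through the red edge $x'x''$ together with the cross-edges between $B'$ and $B''$ forced by $\alpha(G)=2$, producing a monochromatic subgraph of diameter $\le 3$ on $\{x',x''\}\cup B'\cup B''$, paired with $S_R$. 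The main technical obstacle is exactly this last step: controlling the cross-clique diameter so that it does not blow up to the sum of the two in-clique monochromatic diameters.
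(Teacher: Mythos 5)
Your decomposition into the two stars $S_R,S_B$ centered at $v$, followed by an attempt to attach the two exceptional vertices $x'=v_1$, $x''=v_2$, is genuinely different from the paper's proof (which applies Lemma \ref{folk} to $K'$ and $K''$ and, in the hard case, attaches $v$ to a base edge of one of the two spanning double stars, using the $P_4^2$ spanned by the two base edges). Your cases (a) and (b) are fine, but two of your cases have real gaps. The mixed sub-case of (c) is not ``handled analogously''. Take $V(G)=\{v,x',a,x'',b,c\}$ with $K'=\{x',a\}$, $K''=\{x'',b,c\}$, the only edge between $K'$ and $K''$ being $x'x''$, and with $va,vb,x'x'',bc$ red and $x'a,x''b,x''c,vc$ blue. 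Then $\alpha(G)=2$, $R'=\{a\}$, $R''=\{b\}$, $B'=\emptyset$, $B''=\{c\}$, $x'$ is unattachable and $x''$ is B-attachable only. The red double star with base $x'x''$ is just the edge $x'x''$ and misses $c$, and no monochromatic connected subgraph contains $\{x',a,b\}$ (the red and blue components of $x'$ are $\{x',x''\}$ and $\{x',a\}$), so neither ``$S_B\cup\{x''\}$ plus one more component'' nor ``$S_R$ plus a red double star'' can succeed. A cover does exist ($\{v,a,b,c\}$ red together with the red edge $x'x''$), but it requires moving $c\in B''$ into the red component via the edge $bc$, which is outside your framework where each of $R',B',R'',B''$ stays attached to $v$ by its own star.

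Case (d) is also not closed. The deduction ``otherwise $R'\cap N(x'')=R''\cap N(x')=\emptyset$'' is a non sequitur: a common blue neighbor $u\in R'$ of $x'$ and $x''$ does not contradict the hypothesis that no vertex of $B'\cup B''$ is one, and adding such a $u$ to $S_B\cup\{x',x''\}$ pushes the diameter to $4$ (from $u$ to a leaf $w\in B'\cup B''$ one needs $u-x'-b_1-v-w$, since $uv$ is red). Moreover, the final gluing step you flag as a ``technical obstacle'' is where the content of the theorem lies: Proposition \ref{simplefolk} gives monochromatic spanning subgraphs of $\{x'\}\cup B'$ and of $\{x''\}\cup B''$ in \emph{some} colors, which need not be red, need not agree with each other, and even if both are red, joining two diameter-$3$ graphs through the single edge $x'x''$ a priori yields diameter up to $7$. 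The paper sidesteps all of this by classifying $K'$ and $K''$ via Lemma \ref{folk}: if both have diameter at most $2$ in some color one argues directly with the stars from $v$, $v_1$, $v_2$; if one has diameter $3$ in both colors one picks the base edge avoiding $v_1$, so that $v$ sees both of its endpoints, and either color of those two edges yields a diameter-$3$ extension of one of the two double stars. I would suggest reworking your argument around that classification.
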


Note that a $(2,2)$-cover cannot be guaranteed in Theorem \ref{antiby3}. Indeed, if $G$ is an antihole with seven vertices then the edges in $G$ can be partitioned into a red and a blue Hamiltonian cycle. Since any monochromatic subgraph of diameter 2 has at most three vertices, there is no way to cover $V(G)$ by two of them.

\subsection{Coverings by components with a fixed diameter}\label{orth}

The conjectures and results about bounding the diameters of the covering components can be considered  from a somewhat orthogonal aspect as well. Suppose that we fix the diameter $d$ of the
monochromatic connected components, how many do we need to cover the vertex set?
Let $g(r,d,\alpha)$ be the minimum $k$ for which every $r$-colored graph $G$ with $\alpha(G)=\alpha$, $V(G)$ has a cover by the vertices of $k$ monochromatic subgraphs of diameter at most $d$.
For $d=2$, the following is an easy observation.
\begin{obs}\label{d=2}  Assume that $G$ is an r-colored graph with $\alpha(G)=\alpha$. We have $g(r,2,\alpha)\leq r \alpha$. i.e. $V(G)$ can be covered by the vertices of $r\alpha$ monochromatic connected components, each with diameter at most 2.
\end{obs}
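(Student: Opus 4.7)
The plan is to cover $V(G)$ by a greedy construction that builds an independent set of ``centers'' and attaches $r$ monochromatic stars to each center, one per color. The starting observation is that for any vertex $v$ and any color $i\in\{1,\dots,r\}$, the set $\{v\}\cup N_i(v)$, viewed as a subgraph of $G_i$, contains the star centered at $v$ in color $i$, and therefore has diameter at most $2$ (any two of its vertices are joined by a path of length at most $2$ through $v$). Taking all $r$ such stars at $v$ together covers exactly $\{v\}\cup N(v)$, the closed $G$-neighborhood of $v$.

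I would then run the following greedy procedure. Pick an arbitrary vertex $v_1$ and throw in the $r$ monochromatic stars centered at $v_1$. If some vertex of $G$ is still uncovered, pick such a vertex $v_2$ and repeat the same step, iterating until every vertex of $G$ is covered. This produces a sequence $v_1,v_2,\dots,v_t$ together with a cover of $V(G)$ by $rt$ monochromatic subgraphs, each of diameter at most $2$.

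The key point, and the only thing to verify, is that $\{v_1,\dots,v_t\}$ is independent in $G$. Indeed, if $j<k$ then $v_k$ is not yet covered after step $j$, so $v_k\notin \{v_j\}\cup N(v_j)$, which means $v_k$ is distinct from and non-adjacent to $v_j$. Hence $\{v_1,\dots,v_t\}$ is an independent set, so $t\le \alpha(G)=\alpha$, and the cover uses at most $r\alpha$ monochromatic subgraphs of diameter at most $2$, as required.

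I do not foresee a real obstacle: the statement is essentially a textbook greedy/independence-number argument, and the diameter-$\le 2$ condition is automatic because stars have diameter at most $2$. The only minor technicality is the degenerate case $N_i(v_k)=\emptyset$ for some center $v_k$ and color $i$, in which case the corresponding ``star'' is the single vertex $\{v_k\}$ (diameter $0$), still within the bound.
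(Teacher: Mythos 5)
Your proof is correct and is essentially the paper's argument: both cover $V(G)$ by the $r$ monochromatic stars centered at each vertex of an independent set of size at most $\alpha$ (the paper takes a maximum independent set, you build a maximal one greedily), and observe that each star has diameter at most $2$. No further comment is needed.
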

Indeed, in a graph with $\alpha(G)=\alpha$ the set of edges incident to an independent set of size $\alpha$ form a spanning subgraph, thus at most $r\alpha$ monochromatic stars give the required cover.

\begin{remark}\label{const} Observation \ref{d=2} is sharp for $r=2,3$ in the sense that $r\alpha-1$ components are not always enough for a covering, so $g(r,2,\alpha)=r \alpha$.
\end{remark}
Indeed, for $r=2$ one can take $\alpha$ vertex disjoint copies of a $2$-colored $K_4$ where both colors form a path with three edges (let us denote this special 2-colored $K_4$ by $P_4^2$, see Figure 1, this will play a role later). For $r=3$ take $\alpha$ copies of a $3$-colored $K_7$ where all colors form a Hamiltonian cycle.  Since the largest monochromatic diameter 2 subgraphs in these colorings have three vertices, each of the $\alpha$ parts needs at least two (respectively three) components for the required cover.

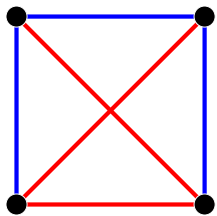
\begin{figure}
		       \centering
                \begin{tikzpicture}[scale=2.5]
                    \centering
				\useasboundingbox (0,-0.2) rectangle (1,1);
				\coordinate (a) at (0,0);
				\coordinate (b) at (1,0);		
				\coordinate (c) at (0,1);
				\coordinate (d) at (1,1);
					\begin{pgfonlayer}{forefore}
					\draw (a) node[vtx] {};
					\draw (b) node[vtx] {};
					\draw (c) node[vtx] {};
					\draw (d) node[vtx] {};
					\end{pgfonlayer}
				\begin{pgfonlayer}{fore}
					\draw[edg,red] (a) -- (b);
					\draw[edg,red] (b) -- (c);
					\draw[edg,blue] (c) -- (d);
					\draw[edg,red] (d) -- (a);
					\draw[edg,blue] (d) -- (b);
					\draw[edg,blue] (a) -- (c);
					\end{pgfonlayer}
			\end{tikzpicture}
			\caption{The special 2-colored $K_4$ denoted by $P_4^2$}
            \label{P_4^2}
	\end{figure}

For $d=4,r=2$ and a general $\alpha$ we have the following.

\begin{theorem}\label{3/2}  Assume that $G$ is a 2-colored graph with $\alpha(G)=\alpha$. We have $g(2,4,\alpha)\leq \lfloor 3\alpha/2 \rfloor$. i.e. $V(G)$ can be covered by the
vertices of $\lfloor 3\alpha/2 \rfloor$ monochromatic connected components, each with diameter at most 4.
\end{theorem}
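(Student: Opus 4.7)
The plan starts from the trivial star cover based on a maximum independent set. Fix a maximum independent set $I=\{v_1,\ldots,v_\alpha\}$; every vertex of $V(G)\setminus I$ has some neighbor in $I$. For each $i$ let $R_i=\{u\in V(G):uv_i\text{ is red}\}$ and $B_i=\{u\in V(G):uv_i\text{ is blue}\}$, so that $R_i\cup\{v_i\}$ and $B_i\cup\{v_i\}$ are monochromatic stars of diameter at most $2$, and together the $2\alpha$ such stars cover $V(G)$. Since $\lfloor 3\alpha/2\rfloor = 2\alpha - \lceil\alpha/2\rceil$, the objective is to save $\lceil\alpha/2\rceil$ components from this baseline.

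The central saving move uses a common monochromatic neighbor of two independent-set vertices. If $v_i,v_j\in I$ admit a common neighbor $w$ with $wv_i$ and $wv_j$ both red, then $R_i\cup R_j\cup\{v_i,v_j,w\}$ is a red connected subgraph of diameter at most $4$ (every vertex in this union is within red distance $2$ of $w$), and this single component replaces the two red stars at $v_i,v_j$, saving one component per such pair. If three vertices $v_i,v_j,v_k\in I$ share a common monochromatic (say red) neighbor $w$, then $R_i\cup R_j\cup R_k\cup\{v_i,v_j,v_k,w\}$ is red with diameter at most $4$ through $w$, and together with the three blue stars at $v_i,v_j,v_k$ covers the triple by $4$ components, saving $2$. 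A variant works when $\{v_i,v_j\}$ have a common red neighbor $w_1$ and $\{v_j,v_k\}$ have a common blue neighbor $w_2$: then one red diameter-$\leq 4$ component for $(v_i,v_j)$, one blue diameter-$\leq 4$ component for $(v_j,v_k)$, one red star at $v_k$ and one blue star at $v_i$ cover the triple with $4$ components, again saving $2$. Finally, whenever some $v_s\in I$ has incident edges of only one color, the cost at $v_s$ drops from two stars to one.

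I encode these opportunities in an auxiliary graph $H$ on $I$ whose edges carry colors: $v_iv_j$ is red in $H$ if $v_i,v_j$ share a common red neighbor, and blue in $H$ if they share a common blue neighbor (a pair may carry both). For even $\alpha$, a color-blind matching of size $\alpha/2$ in $H$ closes the proof. For odd $\alpha=2k+1$, the target $\lfloor 3\alpha/2\rfloor = 3k+1$ is reached by one of: (i) a matching of size $k$ together with an unmatched vertex $v_s\in I$ having monochromatic incident edges (saving $k+1$), or (ii) a matching of size $k-1$ together with a disjoint ``good triple'' saving $2$, i.e.\ a length-two path in $H$ whose two edges have distinct colors or a triangle realized by a common monochromatic neighbor of all three endpoints. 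The main obstacle, and the technical heart of the proof, is guaranteeing such a configuration in $H$. I plan to handle it via a deficiency argument: if none of the required matchings or triples exists, then a large subset $S\subseteq I$ is $H$-isolated, forcing every common neighbor of any pair in $S$ to be color-mixed (red to one endpoint, blue to the other). I expect this constrained regime to force either some $v_i\in I$ to have monochromatic incident edges (recovering the saving via (i)) or to produce the missing good triple by Ramsey-type arguments on the mixed common neighborhoods. The base case $\alpha=2$ is absorbed by Theorem~\ref{main}.
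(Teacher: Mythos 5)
Your proposal has a genuine gap at its core: the existence of the required matching (or triple) in the auxiliary graph $H$ is exactly the hard part, and it is not established --- indeed it can fail badly. Nothing prevents $H$ from being empty (every common neighbour of every pair in $I$ is colour-mixed) or from being, say, a single star centred at $v_1$, in which case the maximum matching has size $1$ and your count gives only $2\alpha-1$ components. Your fallback for the $H$-isolated set $S$ is stated as an expectation (``I expect this constrained regime to force \dots''), and the two escapes you name do not materialize: a vertex $v_i\in S$ can perfectly well have both red and blue edges to its private neighbours, and no Ramsey argument on mixed common neighbourhoods will produce a good triple, since by definition of $S$ no pair in $S$ even has a common monochromatic neighbour. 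The missing idea --- which is what the paper's second proof uses in precisely this regime --- is that the private neighbourhood $V_1^i$ of $u_i\in I$ (the vertices whose only neighbour in $I$ is $u_i$) spans a clique together with $u_i$, by maximality of $I$; Proposition~\ref{simplefolk} then covers $\{u_i\}\cup V_1^i$ by a \emph{single} monochromatic component of diameter at most $3$, dropping the cost at $u_i$ from two stars to one. The vertices with two (necessarily differently coloured) neighbours in $I$ are then attached to these components or swept up by $\lfloor\alpha/2\rfloor$ extra stars from the minority colour class. Without the clique observation, your isolated vertices each still cost two stars and the bound is not reached.

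For contrast, the paper avoids your matching problem in two different ways. Its first proof turns your ``saving move'' into an induction: if some non-adjacent pair $x,y$ has a common monochromatic neighbour, it covers all of $N[x]\cup N[y]$ by three components of diameter at most $4$ and recurses on the remainder, whose independence number is at most $\alpha-2$; this removes the need to pair up \emph{all} of $I$ simultaneously. Its second proof assumes no such pair exists anywhere (so your $H$ is globally empty) and runs the clique argument sketched above. If you want to salvage your static, single-independent-set framework, you would have to combine your pairing with the clique observation for the unmatched vertices, and at that point you are essentially reconstructing the paper's second proof with extra bookkeeping.
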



\section{Tools}\label{tools}

We will use the following easy observation several times.

\begin{proposition}\label{szor}
Let $G$ be a graph with $diam(G)=2$.
\begin{itemize}
\item[(i)] Let us add a star to $G$ from an arbitrary vertex of $G$ to new vertices. Denote the resulting graph
by $G'$, then $diam(G')\leq 3$.
\item[(ii)] Let us add several disjoint stars to $G$ from vertices of $G$ to new vertices. Denote the resulting graph
by $G''$, then $diam(G'')\leq 4$.
\end{itemize}
\end{proposition}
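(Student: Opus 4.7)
The plan is a straightforward case analysis on the locations of the endpoints of an arbitrary pair of vertices in the augmented graph, using the hypothesis $\text{diam}(G)=2$ to control distances inside the original graph and the leaf-edges of the added stars to reach the new vertices.

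For part (i), let $v\in V(G)$ be the center of the added star and let $L$ denote the set of new leaves attached to $v$. I would pick two arbitrary vertices $x,y\in V(G')$ and split into three cases: (a) both $x,y\in V(G)$, in which case $d_{G'}(x,y)\le d_G(x,y)\le 2$; (b) exactly one of them, say $y$, lies in $L$, so $d_{G'}(x,y)\le d_G(x,v)+1\le 3$; (c) both $x,y\in L$, in which case they are joined by a path of length $2$ through $v$. Taking the maximum yields $\text{diam}(G')\le 3$.

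For part (ii), let the star centers be $v_1,\ldots,v_m\in V(G)$ with leaf-sets $L_1,\ldots,L_m$ (pairwise disjoint and disjoint from $V(G)$). Again by case analysis on a pair $x,y\in V(G'')$: if both lie in $V(G)$ the distance is at most $2$; if $x\in V(G)$ and $y\in L_i$, then $d_{G''}(x,y)\le d_G(x,v_i)+1\le 3$; and in the worst case $x\in L_i,y\in L_j$, in which case the path $x\to v_i\to\cdots\to v_j\to y$ has length at most $1+2+1=4$. This gives $\text{diam}(G'')\le 4$.

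There is no real obstacle here — the proof is essentially three or four short triangle-inequality estimates. The only thing worth being careful about is the case $i=j$ in (ii), where two leaves of the same added star are at distance $2$ through the shared center, so the bound of $4$ holds a fortiori; and the possibility that $x=v$ or $x=v_i$ in the boundary cases of (i) and (ii), which is handled because $d_G(v,v)=0$.
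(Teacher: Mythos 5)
Your case analysis is correct and is exactly the standard argument; the paper itself gives no proof of this proposition (it is stated as an "easy observation"), and your three triangle-inequality estimates, including the careful handling of two leaves in the same or different stars, are precisely what is intended.
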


The following is also a simple but useful proposition.

\begin{prop}\label{bipcomp} If $G^c$ is bipartite, i.e. $G$ is the union of two disjoint cliques, then $G$ has a $(3,3)$-cover. If $G^c$ is not bipartite and $\alpha(G)=2$ then $G$ contains an odd antihole.
\end{prop}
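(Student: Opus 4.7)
The plan is to handle the two clauses separately, both of which reduce to very short arguments once one translates the hypotheses through the complement.

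For the first clause, observe that if $G^c$ is bipartite with parts $A$ and $B$, then by definition $A$ and $B$ are independent in $G^c$, so the induced subgraphs $G[A]$ and $G[B]$ are both complete graphs (edges between $A$ and $B$ are allowed but irrelevant). Restricted to each of these complete subgraphs we have a $2$-edge-coloring of a clique, so Proposition \ref{simplefolk} supplies a spanning monochromatic subgraph of diameter at most $3$ on $A$ and another on $B$. These two monochromatic subgraphs cover $V(G)$, yielding a $(3,3)$-cover.

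For the second clause, the plan is to dualize. The assumption $\alpha(G)=2$ is equivalent to $\omega(G^c)=2$, i.e., $G^c$ is triangle-free. Since $G^c$ is not bipartite, it contains at least one odd cycle; choose one of minimum length and call it $C$, with length $\ell$. Triangle-freeness forces $\ell\ge 5$. The key step is to argue $C$ is induced in $G^c$: any chord would split $C$ into two shorter cycles whose lengths add up (with the chord counted twice) to $\ell+2$, an odd number, so one of the two pieces would be an odd cycle shorter than $C$, contradicting minimality. Hence $G^c$ contains an induced odd cycle of length at least $5$, an odd hole, and taking complements shows that $G$ contains the corresponding odd antihole as an induced subgraph.

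There is no serious obstacle in either clause; the only point worth flagging is the standard "shortest odd cycle is induced" argument in part two, which must be invoked carefully because it uses both triangle-freeness (to rule out $\ell=3$) and minimality (to rule out chords). Everything else is a direct application of Proposition \ref{simplefolk} and a routine complementation dictionary between holes in $G^c$ and antiholes in $G$.
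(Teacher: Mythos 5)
Your proof is correct and follows essentially the same route as the paper's (very terse) argument: the first clause is Proposition \ref{simplefolk} applied to the two cliques $G[A]$ and $G[B]$, and the second clause is the observation that a shortest odd cycle in the triangle-free graph $G^c$ is an induced cycle of length at least $5$, i.e.\ an odd hole of $G^c$ and hence an odd antihole of $G$. You have merely spelled out the standard chord/parity details that the paper leaves implicit.
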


\begin{proof}The first statement follows from Proposition \ref{simplefolk} above. For the second statement, observe that an odd cycle of minimum length in $G^c$ must be an odd antihole.
\end{proof}

We will also need a slight strengthening of Proposition \ref{simplefolk}. It classifies $2$-colored complete graphs into four mutually exclusive categories.
To this end we define two families of special $2$-colored complete graphs, ${\cal{H}}_1$ and ${\cal{H}}_2$.

Let the two colors be colors 1 and 2. Let $\{x_1,x_2,x_3,x_4,x_5\}$ be the vertices of the complete graph $K_5$.
The edges of the four-cycle $(x_1,x_2,x_3,x_4)$ and the edges of the triangle $(x_1,x_2,x_5)$ are colored with color 1 (forming  a ``house'' graph).  The edges $(x_1,x_3)$ and $(x_2,x_4)$ are colored with color 2. The color of the edges $(x_4,x_5)$ and ($x_3,x_5)$ are not specified (see Figure 2 where color 1 is red). Then we define ${\cal{H}}_1$ as the family obtained from this special 2-colored $K_5$ by the following substitutions. Replace vertex $x_i$  with a vertex-set $A(i)$, $|A(i)|=n_i$, obeying the following requirements:  $n_1=n_2=1, n_3,n_4, n_5\ge 0$ (i.e. $x_3, x_4, x_5$ might be deleted). Substitute arbitrary $2$-colored complete graphs into $A(i)$.  For $i\ne j$, the  edges of  $[A_i,A_j]$ are colored with the color of $(x_i,x_j)$ if the color is specified, otherwise (in $[A_3,A_5],[A_4,A_5]$) the coloring is arbitrary. We get ${\cal{H}}_2$ by swapping the two colors.
Note that any graph in ${\cal{H}}_1$ has diameter at most $2$ in color 1, while any graph in ${\cal{H}}_2$ has diameter at most $2$ in color 2.
Furthermore, in any graph in ${\cal{H}}_1$, the edge $(x_1,x_2)$ and the edges of the bipartite graphs $[x_1,A_3],[x_2,A_4]$ are base edges of spanning double stars in color 1 (similarly in ${\cal{H}}_2$).

\begin{figure}
		       \centering
                \begin{tikzpicture}[scale=2.5]
                    \centering
				\useasboundingbox (0,-0.2) rectangle (1,1.5);
				\coordinate (a) at (0,0);
				\coordinate (b) at (1,0);		
				\coordinate (c) at (0,1);
				\coordinate (d) at (1,1);
				\coordinate (e) at (0.5,1.5);
					\begin{pgfonlayer}{forefore}
					\draw (a) node[vtx,label={left:$x_4$}] {};
					\draw (b) node[vtx,label={right:$x_3$}] {};
					\draw (c) node[vtx,label={left:$x_1$}] {};
					\draw (d) node[vtx,label={right:$x_2$}] {};
                    \draw (e) node[vtx,label={above:$x_5$}] {};
					\end{pgfonlayer}
				\begin{pgfonlayer}{fore}
					\draw[edg,red] (a) -- (b);
					\draw[edg,red] (c) -- (d);
					\draw[edg,red] (c) -- (a);
					\draw[edg,red] (b) -- (d);
					\draw[edg,blue] (c) -- (b);
					\draw[edg,blue] (a) -- (d);
					\draw[edg,red] (c) -- (e);
					\draw[edg,red] (d) -- (e);
					\end{pgfonlayer}
			\end{tikzpicture}
			\caption{The ``skeleton" of the graphs in ${\cal{H}}_1$}
            \label{P_4^2}
	\end{figure}
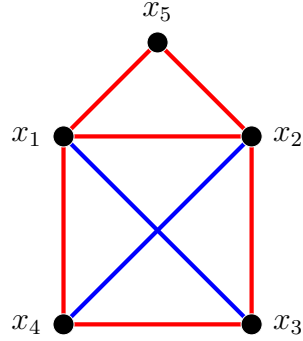

\begin{lemma}\label{folk}  Assume that $G$ is a 2-colored complete graph with at least two vertices, $G=G_1\cup G_2$, $diam(G_1)\ge diam(G_2)$.  Then exactly one of the following possibilities holds.
\begin{itemize}
\item[(i)]  $diam(G_1)>3$. Then we must have $G \in {\cal{H}}_2$ and $diam(G_2)\leq 2$.
\item[(ii)] $diam(G_1)=diam(G_2)=3$. Then $G_1,G_2$ both have spanning double stars and their base edges span a $P_4^2$.
\item[(iii)] $diam(G_1)=3, diam(G_2)=2$. Then $G_2$ has a spanning double star.
\item[(iv)] $diam(G_1)=diam(G_2)=2$.
\end{itemize}
\end{lemma}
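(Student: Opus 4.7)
The plan is to extract all four cases from one simple observation together with Proposition \ref{simplefolk}. The latter forces $diam(G_2) \leq 3$ under the standing assumption $diam(G_1) \geq diam(G_2)$, so the four listed cases are exhaustive and (being distinguished by different diameter values) mutually exclusive; case (iv) is vacuous. The rest of the proof rests on the following observation: if $d_{G_i}(u,v) \geq 3$, then $uv \in G_j$ (since $d_{G_i}(u,v) > 1$) and no vertex $w$ can have $uw, vw \in G_i$ (since $d_{G_i}(u,v) > 2$); consequently every $w \neq u, v$ satisfies $uw \in G_j$ or $vw \in G_j$, so $G_j$ contains a spanning double star with base $uv$.

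Case (iii) follows at once: pick any pair $u, v$ with $d_{G_1}(u,v) = 3$ and invoke the observation with $i=1$, $j=2$. For case (ii), I apply the observation twice. Pick $u_1, v_1$ with $d_{G_2}(u_1, v_1) = 3$ (giving a spanning $G_1$-double star with base $u_1 v_1 \in G_1$) and $u_2, v_2$ with $d_{G_1}(u_2, v_2) = 3$ (giving a spanning $G_2$-double star with base $u_2 v_2 \in G_2$). Since $u_2, v_2$ have no common $G_1$-neighbor but each is $G_1$-adjacent to $u_1$ or $v_1$, I may assume WLOG that $u_2 u_1, v_2 v_1 \in G_1$ and $u_2 v_1, u_1 v_2 \in G_2$. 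A short contradiction rules out coincidences among the four vertices: if, say, $u_1 = u_2$, then the $G_1$-double star forces $v_2 v_1 \in G_1$ (since $v_2 u_1 = v_2 u_2 \in G_2$), whereas the $G_2$-double star forces $v_1 v_2 \in G_2$ (since $v_1 u_1 \in G_1$), a contradiction. With the four vertices distinct, the six edges among them form the $G_1$-Hamiltonian path $u_2 u_1 v_1 v_2$ and the $G_2$-Hamiltonian path $u_1 v_2 u_2 v_1$, exhibiting exactly the coloring $P_4^2$.

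For case (i), fix $u, v$ with $d_{G_1}(u, v) \geq 4$ and partition the remaining vertices as $A = N_1(u) \setminus \{v\}$, $B = N_1(v) \setminus \{u\}$, and $C$ (the vertices $G_2$-adjacent to both $u$ and $v$). The hypotheses $d_{G_1}(u,v) > 2$ and $d_{G_1}(u,v) > 3$ respectively force $A \cap B = \emptyset$ and all $[A, B]$-edges into $G_2$. A routine case check then establishes $diam(G_2) \leq 2$: any two vertices of $A$ share $v$ as a $G_2$-neighbor, any two of $B$ share $u$, any two of $C$ share both, $[A, B]$ pairs are $G_2$-adjacent, and the remaining mixed and $\{u,v\}$-involving pairs are handled in the same way. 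Setting $x_1 = u$, $x_2 = v$, $A(3) = A$, $A(4) = B$, $A(5) = C$ and comparing each edge class against the template of ${\cal{H}}_2$ shows $G \in {\cal{H}}_2$: the $G_1$-edges between $\{u,v\}$ and $A \cup B$ are precisely the two diagonals $x_1 x_3$ and $x_2 x_4$, the $G_2$-edges fill out the $4$-cycle $x_1 x_2 x_3 x_4$ together with the triangle $x_1 x_2 x_5$, and the internal colorings of $A, B, C$ together with the $[A, C]$ and $[B, C]$ colorings occupy exactly the arbitrary slots allowed by the substitution.

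The main obstacle is more tedious than conceptual: in case (ii) one must simultaneously exploit both diameter hypotheses to verify the distinctness of the four base-endpoints and their placement into the $P_4^2$ pattern, while in case (i) one must patiently verify that every edge class inherited from the partition $V = \{u, v\} \cup A \cup B \cup C$ corresponds to the correct slot of the ${\cal{H}}_2$ substitution template.
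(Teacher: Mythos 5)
Your proof is correct and takes essentially the same route as the paper: the same key observation that $d_{G_i}(u,v)\ge 3$ forces $uv\in G_j$ and a spanning $G_j$-double star with base $uv$, applied once for (iii), twice (plus the disjointness check on the two base edges) for (ii), and the same partition $\{u,v\}\cup N_1(u)\cup N_1(v)\cup C$ matched against the ${\cal{H}}_2$ template for (i). The only point you gloss over in the exhaustiveness claim is that $diam(G_2)=1$ would force $diam(G_1)=\infty$ (so it cannot accompany a finite $diam(G_1)$), which the paper also dispatches in a parenthetical remark.
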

Note that in cases (i),(ii),(iii) we have specific subgraphs witnessing that $G_2$ (or both $G_1$ and $G_2$) have a small diameter. In case (iv) no specific witness exists since in random 2-colorings $diam(G_1)=diam(G_2)=2$.
\begin{proof}

\noindent
If $diam(G_1)>3$, then there must exist two vertices $x_1,x_2$ such that  $d_{G_1}(x_1,x_2)>3$. This implies that $(x_1,x_2)\in E(G_2)$, $N_1(x_1)\cap N_1(x_2)=\emptyset$
and that there is no edge of color 1 between $N_1(x_1)$ and $N_1(x_2)$.
Thus all of the following edges must be of color 2: edges from $x_1$ to $N_1(x_2)$, from $x_2$ to $N_1(x_1)$, between $N_1(x_1)$ and $N_1(x_2)$ and finally
from $\{x_1,x_2\}$ to $S=V(G)\setminus (\{x_1,x_2\}\cup N_1(x_1)\cup N_1(x_2))$.
This implies $G\in {\cal{H}}_2$.
Indeed, in addition to $\{x_1, x_2\}$, the sets $N_1(x_1), N_1(x_2), S$, are obtained by substitutions into $x_3, x_4,x_5$, respectively. Clearly, $diam(G_2)\le 2$, proving (i).
\smallskip

If $diam(G_1)=diam(G_2)=3$, then we must have two vertices $x_1,x_2$ such that $d_{G_1}(x_1,x_2)=3$. This implies that $(x_1,x_2)\in E(G_2)$ and $N_1(x_1)\cap N_1(x_2)=\emptyset$.
Thus indeed the edge $e=(x_1,x_2)$ is the base edge of a spanning double star in color $2$. We can apply this argument to $G_2$ as well
to get a double star in color 1 with base edge $f=(x_3, x_4)$. Observe first that $e,f$ must be vertex disjoint edges because of the definition of the spanning double stars.
Furthermore, the edge $e$ sends exactly one edge of color 2 to $x_3$ and exactly one edge of color 2 to $x_4$ and similarly for the edge $f$ (using the fact that these are spanning
double stars). Then indeed $e$ and $f$ span a $P_4^2$, i.e. both colors form a path of length 3, proving (ii).
\smallskip

The remaining two possibilities  are (iii) and (iv) (note that if $diam(G_2)=1$, then $diam(G_1)=\infty$ and this case is covered in (i)).
\end{proof}

It is worth mentioning that (ii) in Lemma \ref{folk} can be strengthened as follows (this is Lemma 1 in \cite{GYfruit}).

\begin{remark}   Let $B_1,B_2$ denote the set of edges in $G$ that are base edges of some spanning double star in $G_1,G_2$, respectively. Then $B_1,B_2$ form vertex-disjoint bipartite graphs (and any $b_1\in B_1, b_2\in B_2$ span a $P_4^2$).
\end{remark}

\section{Proofs}




\subsection{Proof of Theorem \ref{main}}

We start with some notation to describe the structure  of a 2-colored $G$ with $\alpha(G)=2$ with respect to a pair of nonadjacent vertices $x, y$. (Note that we are using a similar notation and outline
as in the proof for diameter 6, Theorem 43 in \cite{DEB}.) The colors are denoted by $1,2$ but sometimes it is more convenient to use red for color 1 and blue to color 2.  With respect to $\{x,y\}$ the other vertices can be partitioned into eight (possibly empty) parts because no vertex in $V(G)\setminus \{x,y\}$ can be non-adjacent to both $x$ and $y$
(that would create an independent set of size 3). The parts containing vertices adjacent to both $x$ and $y$ in the same color are called {\em homogeneous} parts, and they are denoted by $A_{11}$ (red to both $x$ and $y$) and $A_{2,2}$ (blue to both $x$ and $y$). Furthermore,

\begin{itemize}
\item parts adjacent to one of $\{x,y\}$ are denoted by $A_x^1,A_x^2,A_y^1,A_y^2$,
\item parts adjacent to both $x$ and $y$ in distinct colors are denoted by $A_{12},A_{21}$,
\item $A_x=A_x^1\cup A_x^2, A_y=A_y^1\cup A_y^2$, $K_x=A_x\cup \{x\}$, $K_y=A_y\cup \{y\}$, $A=A_{11}\cup A_{22}\cup A_{12}\cup A_{21}=N(x)\cap N(y)$.
\end{itemize}

Since $x,y$ are non-adjacent to any vertex of $K_y,K_x$, respectively, and $\alpha(G)=2$, we get the following.
\begin{obs}\label{one} $K_x,K_y$ span complete graphs in $G$.
\end{obs}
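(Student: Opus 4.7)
The plan is to prove Observation \ref{one} by a direct contradiction using only the hypothesis $\alpha(G)=2$ together with the definitions of the parts.

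First I would unpack what $K_x$ and $K_y$ mean: every vertex of $A_x = A_x^1 \cup A_x^2$ is, by definition, adjacent to $x$ but \emph{not} adjacent to $y$, and $x$ itself is not adjacent to $y$. Consequently the single vertex $y$ is non-adjacent to every vertex of $K_x = A_x \cup \{x\}$. Symmetrically, $x$ is non-adjacent to every vertex of $K_y$. This ``external witness'' is exactly what makes the argument work.

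Next I would argue completeness of $K_x$ by contradiction. Suppose $u,v \in K_x$ are non-adjacent in $G$. Then the set $\{u,v,y\}$ consists of three pairwise non-adjacent vertices: $u$ and $v$ are non-adjacent by assumption, and $y$ is non-adjacent to both $u$ and $v$ by the observation of the previous paragraph. This contradicts $\alpha(G) = 2$. Hence every pair of vertices in $K_x$ is adjacent, so $K_x$ spans a complete subgraph. The argument for $K_y$ is identical, using $x$ as the external witness.

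No real obstacle is anticipated; the whole point of isolating this as an observation is that it is an immediate consequence of the partition of $V(G) \setminus \{x,y\}$ induced by the non-adjacent pair $\{x,y\}$. The only thing to be slightly careful about is the edge case where one of $u,v$ equals $x$ itself, but that case is handled uniformly because $x \in K_x$ and $y$ is non-adjacent to $x$ by the choice of the pair $\{x,y\}$. This is why the excerpt can state the proof in a single sentence, and my write-up would follow suit.
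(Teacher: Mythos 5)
Your proof is correct and is exactly the paper's argument, merely spelled out: the paper's one-line justification ("Since $x,y$ are non-adjacent to any vertex of $K_y,K_x$, respectively, and $\alpha(G)=2$") is precisely your observation that $y$ serves as an external non-neighbour of all of $K_x$, so any non-adjacent pair inside $K_x$ would yield an independent set of size $3$. Nothing is missing.
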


Note first that if both homogeneous parts, $A_{11}, A_{22}$, are nonempty then Theorem \ref{main} is obviously true,
because the two monochromatic connected components
$$G_1[ \{x\}\cup A_x^1 \cup (A\setminus A_{22}) \cup \{y\} \cup A_y^1] \; \mbox{and} \; G_2[ \{x\}\cup A_x^2 \cup A_{22} \cup \{y\} \cup A_y^2]$$ provide a $(4,4)$-cover of $G$.
The case when there is no homogeneous part can be eliminated by choosing a suitable pair $x,y$ considering the two possibilities in Proposition \ref{bipcomp}.
Indeed, if $G^c$ is bipartite then $G$ has a $(3,3)$-cover. Otherwise, $G$ contains an odd antihole $C$. The long diagonals of $C$ (connecting a vertex to the two vertices farthest away on $C$) form a 2-colored Hamiltonian cycle which must contain two consecutive long diagonals $(a,x),(a,y)$ of the same color. Since $x,y$ are non-adjacent in $G$, $a$ belongs to a homogeneous part corresponding to $x,y$, say $a\in A_{11}$. Thus we may assume that we have exactly one non-empty homogeneous part, say $A_{11}$.

We may assume that the red diameter of at least one of $K_x$ and $K_y$ is greater than 3, otherwise
$$G_1[K_x \cup A_{11} \cup A_{12}] \; \mbox{and} \; G_1[ K_y \cup A_{21}]$$
provide a $(4,4)$ covering of $G$.
Thus, w.l.o.g. we may assume that the red diameter of $K_y$ is greater than 3, so by Lemma \ref{folk} (i) the blue diameter of $K_y$ is at most 2. Then, by using Lemma \ref{folk} (i) for $K_x$, we have two remaining cases:
either the blue diameter of $K_x$ is at most 3, or otherwise, the red diameter of $K_x$ is at most 2.

{\bf Case 1:} $K_x$ has blue diameter at most 3 (and the blue diameter of $K_y$ is at most 2).

If every vertex in $A_{11}$ sends a blue edge to $A_x^2\cup K_y$, then let $A_{11}^x$ denote the set of vertices in $A_{11}$ that send a blue edge to $A_x^2$ and let $A_{11}^y = A_{11}\setminus A_{11}^x$. We claim that
$$G_2[K_x \cup A_{11}^x \cup A_{21}] \; \mbox{and} \; G_2[ K_y \cup A_{11}^y\cup A_{12}]$$
provide a $(4,4)$-covering of $G$ by two blue components.

Indeed, the fact that the second blue component has diameter at most 4 follows immediately from Proposition \ref{szor}(ii).
For the first blue component, observe that the blue diameter $K_x$ is at most three by assumption. We claim that extending $K_x$ with $U=A_{11}^x\cup A_{21}$ spans a graph with blue diameter at most $4$. Indeed, the distance of a vertex of $U$ and a vertex of $K_x$ is obviously at most $4$. The blue distances within $A_{11}^x,A_{21}$ and from $A_{11}^x$ to $A_{21}$ are at most $4$, $2$, $3$, respectively, using $x$ for connection.

Thus we may assume that there is a vertex $z\in A_{11}$ such that every edge from $z$ to $A_x^2\cup K_y$ is red.
Let $Z= \{ v \; | \; v\in A_x^2\cup K_y, (z,v)\not\in E(G) \}$. Then $Z$ spans a complete graph in $G$, so by Proposition \ref{simplefolk} it has diameter at most 3 in one of the colors.
This is one component of the cover while the other one is
$$G_1[\{x,y\}\cup A_x^1 \cup A \cup N_1(z)],$$
having also diameter at most 4, since it is a red  triple star with centers $x$, $y$ and $z$.

{\bf Case 2:} $K_x$ has red diameter at most 2 (and the blue diameter of $K_y$ is at most 2).

Set $R:= V(G)\setminus (A_y^2\cup A_{12})$. Note that $R$ can be partitioned into two red subgraphs of diameter at most 2: $K_x$ and a red star from $y$.
If every vertex in $A_y^2$ sends a red edge to $R$, then using Proposition \ref{szor} (ii) we have a cover by two red components of diameter at most 4 (note that
$A_{12}$ can be added to the first one containing $K_x$).

Thus we may assume that there is a vertex $z\in A_y^2$ such that every edge from $z$ to $R$ is blue.
Let $Z= \{ v \; | \; v\in R, (z,v)\not\in E(G) \}$. Then $Z$ spans a complete graph in $G$, so by Proposition \ref{simplefolk} it has diameter at most 3 in one of the colors.
This is one component of the cover while the other one is
$$G_2[\{y\}\cup A_y^2 \cup A_{12} \cup N_2(z)],$$
having also diameter at most 3 (by Proposition \ref{szor}(i)), since it is a star from $y$ with another star added to it from one of the leaves, namely from $z$. This finishes the proof. \qed

\subsection{Proof of Theorem \ref{antiby3}}
\begin{proof} Let $v_1\in V(K'),v_2\in V(K'')$ denote the vertices not adjacent to $v$.
Consider a 2-coloring of $G$ with colors 1 and 2. Applying Lemma \ref{folk} in $K'$ and $K''$, we have two cases.

{\bf Case 1:} $K'$ and $K''$ both have diameter at most $2$ in one of the colors, say in colors $c_1,c_2$, respectively. Let $\bar{c_i}$ denote the color that is different from $c_i$.
We claim that there is a $(3,2)$-cover of $G$. Indeed, if any edge in $[v,V(K')]$ is of color $c_1$ then $V(K')\cup \{v\}$ is of diameter at most $3$ in color $c_1$ and we have a $(3,2)$-cover.
Similarly, if any edge in $[v,V(K'')]$ is of color $c_2$ then $V(K'')\cup \{v\}$ is of diameter at most $3$ in color $c_2$ and we have a $(3,2)$-cover again.
Otherwise all edges of $[v,V(K')]$ are of color $\bar{c_1}$ and all edges of $[v,V(K'')]$ are of color $\bar{c_2}$. If $v_1$ sends a color $\bar{c_1}$ edge to $V(K')$ or $v_2$ sends a color $\bar{c_2}$ edge to $V(K'')$ then again, we have a $(3,2)$-cover. Otherwise all edges of $[v_1,V(K')]$ are of color $c_1$ and all edges of $[v_2,V(K'')]$ are of color $c_2$.
Finally, consider the edge $(v_1,v_2)$. If $c_1=c_2$, then the star from $v$ and the $(v_1,v_2)$ edge form a (2,1)-cover. If $c_1\not = c_2$, wlog let the color of the $(v_1,v_2)$ edge be $c_1$ (symmetric in the other case).
We can add the edge $(v_1,v_2)$ to the $c_1$-colored star from $v_1$ and together with the $c_1$-colored star from $v$ we get a (2,2)-cover.

{\bf Case 2:} At least one of $K',K''$, say $K'$ has diameter 3 in both colors. By (ii) in Lemma \ref{folk}, $K'$ has monochromatic spanning double stars $T_1$ in color 1, $T_2$ in color 2, where the base edges, $e=(x_1,x_2)$ in color 1 and $f=(x_3,x_4)$ in color 2, span a $P_4^2$. One of the two base edges, say $e$, is not incident to $v_1$. Thus $g=(v,x_1),h=(v,x_2)$ are both edges of $G$. If $g$ or $h$ has color 1 then $T_1\cup \{v\}$ is a double star in color 1 and together with $K''$ we get a $(3,3)$-cover of $G$. Otherwise  $g,h$ both have color 2 and we get a five cycle in color 2 on vertices $\{v,x_1,x_2,x_3,x_4\}$. Now $T_2\cup \{v\}$ is still of diameter $3$ in color 2, because from $v$ we have a path of length 2 in color 2 to {\em both endpoints} of the base edge of $T_2$.
\end{proof}

\subsection{Two proofs of Theorem \ref{3/2}}

We proceed by induction on $\alpha$. The statement is clearly true for $\alpha=1$ (complete graph), by Proposition \ref{simplefolk} we can cover by one
monochromatic connected component of diameter at most 3. It is also true for $\alpha =2$ by Theorem \ref{main} (actually 2 components are sufficient instead of 3).
Take a general $\alpha>2$ and assume that the statement is true
for every $1\leq \alpha' < \alpha$. Let $G=(V,E)$ be a graph with $\alpha(G)=\alpha$.

Applying a deep tool, one can give a short proof as follows. If $G$ is a perfect graph, then Proposition \ref{perfectgraphs} gives a much stronger result than we are looking for. Otherwise, by the strong perfect graph theorem \cite{CRST}, $G$ must contain an odd hole $C$ or an odd antihole $A$. In the first case we must have two consecutive edges on $C$ with the same color, in the second case the odd cycle formed by the long diagonals of $A$ must have two consecutive edges of the same color.
Thus in both cases there are two non-adjacent vertices $x$ and $y$, which are connected by a monochromatic (say red) path of length 2.

Now we can cover $\{x,y\}\cup N(x)\cup N(y)$ by three monochromatic connected components of diameter at most 4.
Indeed, the two red stars from $x$ and $y$ connect to one red component of diameter at most 4 by the assumption.
Let $R$ denote the leftover vertices.
The vertices of $R$ are not adjacent to $x$ and $y$, thus we have $\alpha (G|_{R}) \leq \alpha -2$, since otherwise we can add $x$ and $y$ to get an independent set of size at least $\alpha + 1$ in $G$, a contradiction.
Hence, by induction $G|_{R}$ can be covered by $\lfloor 3(\alpha - 2)/2\rfloor = \lfloor 3\alpha/2 \rfloor - 3$ monochromatic connected components of diameter at most 4. Then altogether we have
$\lfloor 3\alpha/2 \rfloor - 3 +3 = \lfloor 3\alpha/2 \rfloor$ monochromatic connected components of diameter at most 4 covering $G$, as desired.

\medskip

To avoid using \cite{CRST}, one can finish the proof as follows. We {\em may assume} that for every non-adjacent pair of vertices in $G$ there is no monochromatic path of length 2 connecting them because otherwise as above we can finish by induction. Consider an independent set $I= \{ u_1, u_2, \ldots , u_{\alpha} \}$ of size $\alpha$ in $G$.

\begin{claim}
Every vertex in $v\in (V\setminus I)$ sends one or two edges to $I$. Furthermore, if $v$ sends two edges, they must have a different color.
\end{claim}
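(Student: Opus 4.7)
The plan is to verify the claim by a very short pigeonhole argument that exploits both the maximality of $I$ and the standing reduction that rules out monochromatic paths of length $2$ between non-adjacent pairs. Concretely, I will establish three facts: (a) every $v \in V\setminus I$ has at least one neighbor in $I$, (b) two neighbors in $I$ must be joined to $v$ in different colors, and (c) three neighbors in $I$ are impossible.

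For (a), if some $v \in V\setminus I$ had no neighbor in $I$, then $I\cup\{v\}$ would be an independent set of size $\alpha+1$, contradicting $\alpha(G)=\alpha$. For (b), suppose $v$ is adjacent to $u_i$ and $u_j$ with $i\neq j$. Since $I$ is independent, $u_i$ and $u_j$ are non-adjacent, and $u_i v u_j$ is a path of length $2$ in $G$. By the reduction assumption we may invoke (no monochromatic $P_3$ between non-adjacent vertices), the edges $(v,u_i)$ and $(v,u_j)$ cannot share a color. For (c), if $v$ had three neighbors $u_i,u_j,u_k\in I$, then by pigeonhole two of the edges $(v,u_i),(v,u_j),(v,u_k)$ would receive the same color, producing a monochromatic $P_3$ between two non-adjacent vertices of $I$, contradicting the same reduction. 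Combining (a) and (c) gives the degree restriction $1\le |N(v)\cap I|\le 2$, and (b) gives the color condition when equality holds on the right.

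There is no real obstacle here: the only non-trivial input is the standing hypothesis that $G$ contains no monochromatic path of length $2$ between any two non-adjacent vertices, which was set up by the preceding paragraph (otherwise the induction already closes). The argument is essentially a two-line pigeonhole on the at most two available colors applied to the edges between $v$ and $I$.
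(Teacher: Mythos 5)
Your proof is correct and follows the same argument as the paper: maximality of $I$ forces at least one edge from $v$ to $I$, and the standing assumption that non-adjacent pairs have no monochromatic path of length $2$ rules out both a third edge (by pigeonhole on the two colors) and two edges of the same color. You merely spell out the pigeonhole step more explicitly than the paper does.
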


Indeed, it is not possible that $v$ sends no edges to $I$, because then we could add $v$ to $I$ to get an independent set of size $\alpha + 1$ in $G$, a contradiction.
Furthermore, $v$ cannot send more than 2 edges by the assumption, because then we would have a pair of non-adjacent vertices with a monochromatic path of length 2 connecting them, a contradiction.
For the same reason, if $v$ sends two edges to $I$, they must have a different color.

Let us label the vertices in $V\setminus I$ by their one or two neighbors in $I$. Let $V_1$ denote the label-1, $V_2$ the label-2 vertices.
Denote the set of vertices in $V_1$ which share the same label $u_i\in I$ by $V_1^i$.
\begin{claim}
The set $u_i \cup V_1^i$ spans a complete graph in $G$.
\end{claim}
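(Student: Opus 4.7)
The plan is to derive a contradiction with the assumed independence number $\alpha(G) = \alpha$ by exhibiting an independent set of size $\alpha+1$ whenever the claim fails. Adjacency to $u_i$ is immediate from the definition: every $v \in V_1^i$ has label $u_i$, which by construction means $(v,u_i) \in E(G)$. So the only content of the claim is that any two vertices of $V_1^i$ are adjacent.

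Suppose, toward a contradiction, that there exist $v, w \in V_1^i$ with $(v,w) \notin E(G)$. I will show that the set
\[
S := \{v, w\} \cup (I \setminus \{u_i\})
\]
is independent in $G$, which has size $2 + (\alpha-1) = \alpha+1$, contradicting $\alpha(G)=\alpha$. The verification has three parts: (a) $I \setminus \{u_i\}$ is independent because $I$ is; (b) $v$ and $w$ are non-adjacent to every $u_j$ with $j \neq i$, since by the labeling the unique $I$-neighbor of a label-$1$ vertex is its label, and here that label is $u_i$; (c) $v$ and $w$ are non-adjacent to each other by our contradictory assumption.

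The main obstacle—indeed the only thing to be careful about—is to make sure one uses the correct consequence of the label-$1$ definition: a vertex of $V_1^i$ is adjacent in $G$ to $u_i$ and to \emph{no other} vertex of $I$. This non-adjacency to $I \setminus \{u_i\}$ is precisely what drives the enlargement of the independent set. Note that this step does \emph{not} require the earlier ``no monochromatic $P_2$ between non-adjacent vertices'' hypothesis; the claim follows purely from the structural definition of $V_1^i$ together with $\alpha(G)=\alpha$.
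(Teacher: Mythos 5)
Your proof is correct and is essentially identical to the paper's: the paper likewise notes that a missing edge inside $V_1^i$ would let the two endpoints be added to $I\setminus\{u_i\}$ to form an independent set of size $\alpha+1$. Your write-up just spells out the verification (and the trivial adjacency to $u_i$) in more detail.
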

Indeed, if an edge is missing in $V_1^i$, then we can add the two vertices to $I\setminus \{u_i\}$ to get an independent set of size $\alpha + 1$ in $G$, a contradiction.
Thus using Proposition \ref{simplefolk} for each $1\leq i \leq \alpha$ we can cover $u_i \cup V_1^i$ by one
monochromatic connected component of diameter at most 3. Label $u_i$ with red or blue according to the color of this component
covering $u_i \cup V_1^i$. Let $I_1$ denote the set of those vertices in $I$ that are labeled with red, and $I_2$ that are labeled with blue.
Assume wlog that $|I_2|\leq |I_1|$, then $|I_2| \leq \lfloor \alpha/2 \rfloor$.

Next consider $V_2$. If a vertex $v\in V_2$ sends a red edge to a $u_i\in I_1$ (similarly for blue), we can add $v$ to the
red component covering $u_i \cup V_1^i$. By Proposition \ref{szor} the diameter is still at most 4.
Thus the only vertices left in $V_2$ are precisely those that send a red edge to $I_2$ and a blue edge to $I_1$.
These vertices can be covered by $|I_2|$ red stars from the vertices in $I_2$. Thus altogether we have
$\alpha + \lfloor \alpha/2 \rfloor = \lfloor 3\alpha/2 \rfloor$ monochromatic connected components of diameter at most 4 covering $G$, as desired. \qed

\section{Conclusion and some open problems}
\begin{itemize}
\item  We have shown that every 2-colored graph $G$ with $\alpha(G)=2$ has a $(4,4)$-cover. This is not true for $(2,2)$-covers by Remark \ref{const}, thus the obvious open question left is whether there is a $(3,4)$-cover or even a $(3,3)$-cover?

\item In a similar vein, can Theorem \ref{antiby3} be strengthened by stating that there is always a $(3,2)$-cover?

\item The following is an interesting conjecture of English, McCourt, Mattes, and  Phillips \cite{EMMP}. Assume that $n$ is even and let $G$ be a graph obtained from $K_n$ by the removal of the edges of a perfect matching (thus $\alpha(G)=2$). Then $V(G)$ has a $(2,2)$-cover.

\item The following question is related to Subsection \ref{orth}. Let $r=2$, $d=3$. It is possible that $g(2,3,\alpha)=\alpha$ and that would give the best possible answer to the problem from \cite{DGHS} discussed in the introduction: $f(2,\alpha)=3$.  However, presently the best we know is $\alpha\le g(2,3,\alpha)\le 2\alpha-1$. The first step would be to decide whether $g(2,3,2)$ is 2 or 3.
\end{itemize}

\end{document}